\documentclass{amsart}

\usepackage{amsmath,amsthm,amssymb}
\usepackage{amsmath,amsfonts,amssymb,amstext,amsthm,amscd,mathrsfs,amsbsy}
\usepackage{graphicx}
\usepackage{url}

\newcommand{\CC}{\mathbb{C}}
\newcommand{\Cc}{\mathbb{C}^*}
\newcommand{\CP}{\mathbb{CP}}
\newcommand{\RR}{\mathbb{R}}
\newcommand{\ZZ}{\mathbb{Z}}

\newcommand{\nn}{\mathbf{n}}
\newcommand{\mm}{\mathbf{m}}

\newcommand{\OO}{\mathcal{O}}

\newcommand{\ol}{\bar}

\newtheorem{theorem}{Theorem}
\newtheorem{lemma}[theorem]{Lemma}
\newtheorem{prop}[theorem]{Proposition}

\title{Spaces of morphisms from a projective space to a toric variety}
\author{Jacob Mostovoy}
\author{Erendira Munguia-Villanueva}

%\email{erendira@matcuer.unam.mx}
\keywords{Toric variety, spaces of toric morphisms, Stone-Weierstrass Theorem, simplicial resolution}
\subjclass[2010]{Primary:  58D15; Secondary: 32Q55}

\begin{document}

\bibliographystyle{plain}

\maketitle

\begin{abstract}
In this paper we study the space of morphisms from a complex projective space to a compact smooth toric variety $X$. It is shown that the first author's stability theorem for the spaces of rational maps from $\CP^m$ to $\CP^n$ extends to the spaces of continuous morphisms from $\CP^m$ to $X$ essentially, with the same proof. In the case of curves, our result improves the known bounds for the stabilization dimension.
\end{abstract}

\section{Introduction}

Given two complex algebraic varieties one can ask how well the space of continuous algebraic morphisms between them approximates the space of all corresponding continuous maps. This question was posed by G.\ Segal in \cite{SE79} where he studied the spaces of holomorphic maps from a Riemann surface to a complex projective space. He proved the following result. Let $S$ be a Riemann surface of genus $g$, $F_d(S,\CP^n)$ the space of rational functions of degree $d$ from $S$ to $\CP^n$, and $M_d(S,\CP^n)$ the corresponding space of continuous maps.
Similarly, denote by $F_d^*(S,\CP^n)$ and $M_d^*(S,\CP^n)$ the respective spaces of basepoint-preserving maps. 
\begin{theorem}[\cite{SE79}]
The inclusions 
$$F_d(S,\CP^n) \rightarrow M_d(S,\CP^n)\quad\text{and}\quad
F_d^*(S,\CP^n) \rightarrow M_d^*(S,\CP^n)$$
are homology equivalences up to dimension $(d-2g)(2n-1)$ for all $g\geq 0$. When $g=0$ these maps are also homotopy equivalences up to the same dimension.
\end{theorem}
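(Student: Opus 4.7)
I would follow Segal's model-and-scan approach. The first step is to give a concrete algebraic model for $F_d^*(\CP^1,\CP^n)$: an element of this space is an $(n+1)$-tuple $(p_0,\dots,p_n)$ of monic polynomials of degree $d$ on $\CC$ having no common complex root (with a normalization encoding the basepoint), so $F_d^*$ appears as the complement of a resultant subvariety in an affine space of complex dimension $d(n+1)$. For general genus $g$, a degree-$d$ morphism $S\to\CP^n$ corresponds to a line bundle $L\in\mathrm{Pic}^d(S)$ together with $(n+1)$ sections of $L$ having empty common zero locus, so $F_d^*$ fibers over the Jacobian of $S$ (real dimension $2g$); this Jacobian factor is ultimately responsible for the $-2g$ correction in the stability range.

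Second, the target $M_d^*$ is a component of $\mathrm{Map}^*(S,\CP^n)$, which for $S=\CP^1$ is a component of $\Omega^2\CP^n$. I would construct a scanning map $s\colon F_d^*\to M_d^*$ by zooming into small discs on $S$ and recording the local ``direction'' determined by the tuple, and then verify that $s$ is homotopic to the natural inclusion. This reduces the problem to a homological comparison between the two spaces.

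Third, the comparison is carried out by building Vassiliev-style simplicial resolutions of both discriminant-type loci, stratified by the collision pattern of zeros, and running the two associated spectral sequences side by side. A codimension count shows that each additional collision contributes real codimension $2n-1$ more on the algebraic side than on the continuous side, and that up to $d-2g$ such collisions can be tracked before the filtration degenerates (the first $2g$ strata being absorbed into the Jacobian direction when $g>0$). The scanning map induces an isomorphism on the corresponding $E_1$-pages in the stated range, yielding the homology equivalence.

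\textbf{Main obstacle.} The core difficulty is the precise codimension bookkeeping for the resolutions of the resultant and discriminant varieties, together with verifying that the scanning map aligns the two filtrations so the spectral-sequence comparison is valid term by term. For $g=0$, both $F_d^*$ and $M_d^*$ are simply connected in the relevant range, so Whitehead's theorem upgrades the homology equivalence to a homotopy equivalence.
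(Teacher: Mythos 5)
This theorem is not proved in the paper at all: it is Segal's result, quoted from \cite{SE79} as motivation, so there is no in-paper proof to match your sketch against. Judged on its own, your plan is a hybrid of Segal's scanning argument with the Vassiliev-resolution method that the present paper uses for its own (different) theorem, and it has a structural gap at its center: you propose to build simplicial resolutions of ``both discriminant-type loci'' and compare the two spectral sequences, but $M_d^*(S,\CP^n)$ is not the complement of a discriminant in any finite-dimensional affine space, so there is nothing to resolve on the continuous side. The way this strategy is actually made to work (in \cite{MO06} and in the present paper) is to interpolate: introduce finite-dimensional spaces of maps given by polynomials in $z$ and $\bar z$, prove that the stabilization maps between successive approximations are homology isomorphisms in the stated range by resolving the \emph{finite-dimensional} discriminants, and only then identify the direct limit with the continuous mapping space via the Stone--Weierstrass theorem. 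Without that interpolation (or Segal's actual configuration-space machinery) your third step has no endpoint to compare with.

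Two further gaps. First, the genus: ``the first $2g$ strata being absorbed into the Jacobian direction'' is not a mechanism. The $-2g$ enters because evaluation of sections of a degree-$d$ line bundle $L$ at $l$ distinct points imposes independent linear conditions only when $H^1(L(-D))=0$, i.e.\ for $l\leq d-2g+1$; this is precisely the analogue of the hypothesis $l\leq p$ in Proposition~\ref{stages}, and it is what truncates the usable part of the filtration. Moreover, for $g>0$ the space $F_d^*$ only fibres over the Jacobian, with fibres that are discriminant complements in vector spaces whose dimension jumps on special divisors, so making the resolution uniform over $\mathrm{Pic}^d(S)$ is genuine work your sketch does not address. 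Second, the homotopy upgrade for $g=0$: $\pi_1$ of the degree-$d$ component of $\Omega^2\CP^1$ is $\pi_3(S^2)=\ZZ$, so for $n=1$ the spaces are \emph{not} simply connected in the relevant range; Segal has to show the spaces involved are simple (computing $\pi_1(F_d^*)$) and apply the Whitehead theorem in its nilpotent form.
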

Segal conjectured that his results could be generalized much further to include spaces of curves in algebraic varieties more general than projective spaces. Such generalizations were obtained in subsequent works by many authors: Kirwan \cite{KI86}, Gravesen \cite{GR89}, Guest \cite{GU84, GU95}, Mann and Milgram \cite{MM91, MM93},  Hurtubise \cite{HU96}, Boyer, Hurtubise and Milgram \cite{BHM01}, Cohen, Lupercio and Segal \cite{CLS99}, and others. At the moment, the strongest results for the case when $g=0$ (rational curves) and the target is a finite-dimensional variety $X$ are those of Guest \cite{GU95} for the case when $X$ is a (possibly singular) toric variety and that of Boyer, Hurtubise and Milgram \cite{BHM01} for the case when $X$ is a smooth K\" ahler manifold with a holomorphic action of a connected complex solvable Lie group which has an open dense orbit on which the group acts freely. A very general conjecture on this subject is given in the paper of Cohen, Jones and Segal \cite{CJS00}. Several papers \cite{GKY99, MO01} give real versions of Segal's theorem (the first real version being due to Segal himself in \cite{SE79}). 

Segal's theorem can also be generalized to the case when the domain of the maps has dimension greater than 1. Using Vassiliev's simplicial resolutions, the first author in \cite{MO06} and \cite{MO11} proved an analog of Segal's theorem for the spaces of continuous rational maps from $\CP^m$ to $\CP^n$ for $1\leq m\leq n$ (see also a real version in \cite{AKY11}).

The purpose of this note is to show that the proof given in  \cite{MO06, MO11} extends to the spaces of maps from $\CP^m$ to a smooth compact toric variety. We shall use the homogeneous coordinates for a toric variety $X$ and the description of the morphisms $\CP^m \rightarrow X$ given by Cox in \cite{COFu95} and \cite{COHo95}: a morphism $\CP^m \rightarrow X$ is given in these coordinates by a collection of homogeneous polynomials with certain properties. Let $P_f$ be the space of morphisms from $\CP^m$ to $X$ given by polynomials of fixed degrees, and whose restriction to a fixed hyperplane $\CP^{m-1}$ coincide with a given morphism $f: \CP^{m-1} \rightarrow X$. 
Consider the space of all continuous maps from $\CP^{m}$ to $X$ whose restriction to $\CP^{m-1}$ coincide with $f$. This space is homotopy equivalent to the $2m$-fold loop space of $X$ and in what follows will be denoted by $\Omega^{2m} X$. We shall show that the inclusion
\[ P_f \rightarrow \Omega^{2m} X\]
induces isomorphism in homology groups up to some dimension depending on $m$, $X$ and the degrees of the polynomials contained in $P_f$. The precise statement of our theorem will be given in the next section. 

In the case $m=1$ our result includes the homology part of the theorem of Guest (for smooth varieties only) as a particular case and,  moreover, gives an improvement over the results of \cite{GU95} in what concerns the stabilization dimension.

\section{Toric varieties}

A toric variety $X$ is a complex algebraic variety which admits an action of the algebraic torus $(\Cc)^n$, with an open and dense orbit on which the action is free.

Given  such $X$, we shall use the following notations: 
\begin{itemize}
\item $N\simeq \ZZ^n$  --- a lattice in $\RR^n$.
\item $M$ --- the integer dual of $N$.
\item $\langle\, , \rangle: N\otimes M\to\ZZ$ --- the natural pairing.
\item  $\Sigma\subset N_{\RR}=\RR^n$ --- the fan of $X$.
\item $\rho_1,...,\rho_r$ --- the one-dimensional cones in $\Sigma$. 
\item $\nn_i$ --- the primitive generator of $\rho_i \cap N$.
\item $D_i$ --- the divisor on $X$ corresponding to $\rho_i$. (Recall that the one-dimensional cones of $\Sigma$ are in correspondence with irreducible $(\Cc)^n$-invariant Weil divisors on $X$.)
\item $A_{n-1}(X)$ --- the group of Weil divisors modulo the subgroup of principal divisors. Two divisors $\sum a_i D_i$ and $\sum b_i D_i$ are in the same class in $A_{n-1}(X)$ if and only if there exists $\mm \in M$ such that $a_i=\langle \nn_i, \mm \rangle + b_i$ for $i=1,...,r$. 
\end{itemize} 
The details of the theory of toric varieties can be found in \cite{COFu95, COHo95, CLS11, FU93}. 

\medskip

In what follows we shall make extensive use of the homogeneous coordinate ring of $X$. It is defined as
$$R=\CC[x_1,...,x_r],$$ 
where the variable $x_i$ corresponds to $\rho_i$. Each monomial $\prod_i x_i^{a_i}$ determines a divisor $$D=\sum_i a_i D_i;$$  we shall use the notation 
$$x^D:=\prod_i x_i^{a_i}.$$ We say that the monomial $\prod_i x_i^{a_i}$ has degree $[D] \in A_{n-1}(X)$; it follows that two monomials $\prod_i x_i^{a_i}$ and $\prod_i x_i^{b_i}$ have the same degree if and only if there is some $m \in M$ such that $a_i=\langle \nn_i, m \rangle + b_i$. 

If we set
\[ R_{\alpha}=\bigoplus_{\deg(x^D)=\alpha} \CC \cdot x^D, \]
then the ring $R$ is $A_{n-1}(X)$-graded: 
\[ R=\bigoplus\, R_{\alpha}.\]
There is an isomorphism $$R_{\alpha} \simeq H^0(X,\OO_X(D)),$$ where $\OO_X(D)$ is the coherent sheaf on $X$ determined by the Weil divisor $D$.

In what follows we shall assume that $X$ is a compact and smooth toric variety. Each cone $\sigma \in  \Sigma$ determines the monomial $x^{\sigma}=\prod _{\rho_i \notin \sigma} x_i$.  Define the subvariety
 $$Y =\{(x_1,...,x_r) \, |\,  x^{\sigma}=0 \text{ for all } \sigma \in \Sigma \} \subset \CC^r,$$
 and the group
 $$G=\{ (\mu_1,....,\mu_r) \in (\Cc)^r\,  |\,  \prod_{i=1}^r \mu_i ^{\langle \nn_i, m  \rangle}=1 \text{ for all } m \in   M\}.$$
The group $G$ acts on $\CC^r - Y$ by multiplication coordinatewise.
The quotient 
$$X=(\CC^r - Y)/G$$
is the toric variety associated to the fan $\Sigma$. 
This description gives us the homogeneous coordinates on $X$. In these coordinates the morphisms $\CP^m \rightarrow X$ can be described explicitly as follows.

Let $P_{i}:\CC^{m+1} \rightarrow \CC$ homogeneous polynomials for $i=1,...,r$ such that $P_i$ has degree $d_{i}$ with $\sum_{i}d_i\nn_i=0$, and $(P_1(x),...,P_r(x)) \notin Y$ for all $x \in \CC^{m+1}-\{0\}$. Then the $r$-tuple $(P_1,...,P_r)$ induces a morphism $f:\CP^m \rightarrow X$. Furthermore,
    \begin{itemize}
     \item two sets of polynomials $\{ P_i\}$ and $\{ P'_i\}$ determine the same morphism if and only if there is $g \in G$ such that $(P_i)=g \cdot (P'_i)$;
     \item all morphisms $\CP^m \rightarrow X$ arise in this way.
    \end{itemize}

We say that a morphism $\CP^m \rightarrow X$ has degree $\ol{d}=(d_1,...,d_r)$ if it is given by an  $r$-tuple of homogeneous polynomials $(P_i)$ with $\deg P_i =d_i$. Let $P_f(\ol{d})$ be the set of all morphisms $\CP^m \rightarrow X$ of degree $\ol{d}$.

A set of edge generators $\{\nn_{i_1},...,\nn_{i_j}\}$  for the fan $\Sigma$ is \textit{primitive} if it does not lie in any cone of $\Sigma$ but every proper subset does. 
 It can be proved that 
\[Y=\bigcup_{\{\nn_{i_1},...,\nn_{i_j} \}  \text{ primitive}}
\{(x_1,...,x_r)\, |\, x_{i_1}=...=x_{i_j}=0 \}. \]

We can now state the main result of the present paper.

\begin{theorem}\label{main} Let $X$ be a smooth compact toric variety associated to a fan $\Sigma$, with $k$ the cardinality of the smallest primitive set of edge generators for $\Sigma$. Then, for all $m<k$, the inclusion of $P_f(\ol{d})$  into  $\Omega^{2m}X$  induces isomorphisms in homology groups for all  dimensions smaller than $d(2k-2m-1)-1$, where $\ol{d}=(d_1,...,d_r)$ and $d=\min\{ d_i\}$.
\end{theorem}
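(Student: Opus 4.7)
The proof will follow the strategy of \cite{MO06, MO11}: realize $P_f(\bar d)$ as the complement of an explicit discriminant in a complex affine space, analyze this complement via a Vassiliev-type simplicial resolution, and compare the resulting spectral sequence with the corresponding one for $\Omega^{2m}X$.

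First, fix a hyperplane $\CP^{m-1}\subset\CP^m$ on which $f$ is defined, and let $W$ denote the complex affine space of $r$-tuples $(P_1,\dots,P_r)$ of homogeneous polynomials with $\deg P_i=d_i$ whose restrictions to $\CP^{m-1}$ equal a prescribed representative of $f$. By the Cox description recalled above, $P_f(\bar d)=W\setminus\Delta$, where
\[
\Delta \;=\; \bigcup_\sigma \bigl\{(P_i)\in W \,:\, \exists\, z\in\CC^m \text{ with } P_i(z)=0 \text{ for all } i\in\sigma\bigr\},
\]
the union running over primitive collections $\sigma$ of edge generators of $\Sigma$ (since $f$ is already a morphism on $\CP^{m-1}$, any common zero can be assumed to lie in the affine chart $\CC^m=\CP^m\setminus\CP^{m-1}$). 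Alexander duality then gives $\widetilde H_i(P_f(\bar d))\cong H^{BM}_{2N-i-1}(\Delta)$, where $N=\dim_{\CC} W$, reducing the problem to a Borel--Moore computation on $\Delta$.

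Next, build a Vassiliev simplicial resolution $\pi\colon|\mathcal X|\to\Delta$ whose fiber over $(P_i)\in\Delta$ is the simplex spanned by those pairs $(z,\sigma)$ such that $P_i(z)=0$ for all $i\in\sigma$. Filtering $|\mathcal X|$ by the number of vertices yields a spectral sequence converging to $H^{BM}_*(\Delta)$. The crucial geometric input is a codimension estimate exploiting the minimality of $k$: each vertex $(z,\sigma)$ imposes at least $k$ complex conditions on the $P_i$, while $z$ ranges over a complex $m$-dimensional space and contributes one real simplicial parameter, so a stratum supported on $p$ distinct vertices has real codimension at least $p(2k-2m-1)$ in $|\mathcal X|$. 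Via Alexander duality, the $p$th filtration piece can only contribute to $\widetilde H_i(P_f(\bar d))$ in dimensions $i\geq p(2k-2m-1)$. Carrying out the same construction for a configuration-space model of $\Omega^{2m}X$ --- accessible through a Stone--Weierstrass / scanning argument identifying $\Omega^{2m}X$ with the stabilized limit $\lim_{\bar d\to\infty} P_f(\bar d)$ --- yields a parallel spectral sequence to which the Vassiliev spectral sequence for $P_f(\bar d)$ maps. The finite-degree truncation forces the two spectral sequences to coincide on filtration levels strictly below $d=\min d_i$, because strata with that few vertices cannot distinguish polynomial data of degrees $\geq d$ from arbitrary continuous data; consequently, the first dimension in which the two can disagree is $i=d(2k-2m-1)-1$, which is exactly the claimed stability bound.

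The main obstacle will be carrying out the codimension analysis and the spectral-sequence comparison rigorously. On the Vassiliev side, one must verify the stated codimension bound for every stratum by a careful transversality argument that fully exploits the smoothness of $X$, the minimality of $k$, and the primitive-collection description of $Y$. On the comparison side, one must construct an explicit scanning map realizing $\Omega^{2m}X$ as the colimit of the $P_f(\bar d)$ compatibly with the simplicial filtrations, and check that the map of spectral sequences is an isomorphism on $E^1$-terms up through filtration level $d$; this is the delicate matching that pins down the precise stabilization dimension.
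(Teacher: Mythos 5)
Your overall skeleton — realize $P_f(\ol{d})$ as the complement of a discriminant in an affine space, apply Alexander duality, resolve the discriminant by a Vassiliev simplicial resolution filtered by the number of vertices, and extract the bound $p(2k-2m-1)$ from a codimension count on the $p$-vertex stratum — is exactly the paper's, and your codimension estimate agrees with what the paper computes (each label in $Y$ imposes at least $k$ independent linear conditions on the coefficients, the configuration point contributes $2m$ real parameters and the simplex one more). But there is a genuine gap in how you propose to compare $P_f(\ol{d})$ with $\Omega^{2m}X$. You invoke ``the stabilized limit $\lim_{\ol{d}\to\infty}P_f(\ol{d})$,'' yet for $m\geq 2$ there are no natural maps $P_f(\ol{d})\to P_f(\ol{d}')$ between spaces of holomorphic morphisms of different degrees: multiplying a representative by extra polynomial factors either introduces points of $Y$ in the image or changes the prescribed restriction to $\CP^{m-1}$, so the colimit you want to scan against does not exist as stated, and the ``parallel spectral sequence for $\Omega^{2m}X$'' together with the map into it is never constructed. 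The paper's resolution of exactly this difficulty is to enlarge the holomorphic spaces to spaces $P_f(\ol{p},\ol{q})$ of maps given by polynomials in both $z$ and $\bar z$; there the stabilization $(h_i)\mapsto(|z|^{2a_i}h_i)$ is an honest affine inclusion preserving the boundary condition, the discriminants, and the filtration of the resolution, and the Stone--Weierstrass theorem identifies the direct limit with $\Omega^{2m}X$. Without some substitute for this step your argument does not produce a map whose effect on homology can be measured.

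Two further points where your sketch diverges from what is actually needed. First, your justification of the truncation at filtration level $d$ (``strata with that few vertices cannot distinguish polynomial data of degrees $\geq d$ from arbitrary continuous data'') is not an argument: the paper instead shows that for $l\leq p$ the stabilization map restricts on each filtration stage to an orientable affine bundle, giving a Thom isomorphism on the compactified stages, and then uses the truncated simplicial resolution of \cite{MO11}, which replaces the full resolution by its $p$th stage at the cost of attaching cells of dimension at most $2N_{\ol{p},\ol{q}}+p(2m-2k+1)$; this is what converts the stage-by-stage comparison into a statement about the discriminants themselves. Second, no $E^1$-comparison of spectral sequences is required once this is done — the paper explicitly avoids writing the spectral sequence — so the ``delicate matching'' you defer to is not the hard part; the hard part is the one identified above, namely producing the stabilization maps and the identification of their limit with $\Omega^{2m}X$.
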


In the case of curves, that is, $m=1$, our estimate is, in general, much stronger than the theorem of Guest \cite{GU95} and the result of Boyer, Hurtubise and Milgram \cite{BHM01} specialized to the case of toric varieties, both of which give the stabilization dimension $d$.

The strategy of the proof is exactly the same as in \cite{MO06} and \cite{MO11} and mirrors other applications of Vassiliev's method. While it contains no essential novelty as compared to the case of a projective space \cite{MO11}, we find it necessary to go through the whole proof in some detail, since the case of a general smooth toric variety involves a number of additional features.

First, we construct a sequence of finite-dimensional approximations to the space of all continuous maps from  $\CP^n$ to a toric variety. These finite-dimensional approximations are spaces of maps that are given by polynomials in both holomorphic and antiholomorphic variables, of fixed degrees, and the first in the sequence is the space of holomorphic maps. The Stone-Weierstrass Theorem implies that these approximations indeed have the space of all continuous maps as a topological limit.

The second part of the proof consists in comparing the topology of two successive finite-dimensional approximations. Here we make use of the fact that these spaces are complements of discriminants in affine spaces. In particular, their topology can be related to the topology of the corresponding discriminants by the Alexander duality. In order to calculate the cohomology of these discriminants one resolves their singularities with the help of the Vassiliev's simplicial resolution, and considers the natural filtrations by the ``fibrewise skeleta''. 

The successive quotients of these filtrations are easy to describe explicitly for the first few terms. For higher terms, it suffices to give a dimension estimate which allows to discard them. One minor new observation in the present note is that in order to describe the relative effect of the stabilization maps on the topology of  the discriminants, one does not really need to write down the corresponding spectral sequence; this, of course, is not a significant simplification of the argument.

In the next section we construct the spaces of finite-dimensional approximations and show that the limit space is, indeed, homotopy equivalent to the space of all continuous maps. Then, in Section~\ref{cd} we describe the approximation spaces as complements of discriminants and study the topology of these discriminants.

\section{The Stone-Weierstrass Theorem}

A $(p,q)$-polynomial in the variables $z_i$ and $\bar{z}_i$ (where $0\leq i\leq m$) is a complex-valued homogeneous polynomial of degree $p$ in the ``holomorphic'' variables $z_i$ and degree $q$ in ``antiholomorphic'' variables $\bar{z}_i$. If $\ol{p}=(p_1,...,p_r)$, $\ol{q}=(q_1,...,q_r) \in \ZZ^r$ with $$\sum p_i \nn_i=\sum q_i \nn_i=0,$$ we define a $(\ol{p},\ol{q})$-map as a map $F:\CP^m \rightarrow X$ given by a $r$-tuple of $(p_i,q_i)$-polynomials such that $(F_1(x),....,F_r(x)) \notin Y$ for all $x \in \CC^{m+1}\setminus \{ 0 \}$.

Note that $(\ol{d},0)$-maps are the morphisms of degree $\ol{d}$. Two collections $(P_i)$ and $(P'_i)$ of $(p_i,q_i)$-polynomials determine the same morphism $F:\CP^m \rightarrow X$ if and only if there are functions $$g_1,\ldots ,g_r:\CC^{m+1}\setminus \{0\} \rightarrow \CC$$ such that $P_i=g_i P_i'$ and $(g_1(x),...,g_r(x)) \in G$ for all $x\in \CC^{m+1}\setminus \{0\}$. In particular, the $g_i$ do not vanish. Given 
\begin{equation} \label{a}
\ol{a}=(a_1,...,a_r) \in \ZZ^r
\end{equation} 
with $$\sum_i a_i \nn_i=0,$$ a $(\ol{p},\ol{q})$-map can be written as a $(\ol{p}+\ol{a},\ol{q}+\ol{a})$-map by multiplying all the polynomials by $(|z|^{2a_1},...,|z|^{2a_r})$ coordinatewise.

Let $f:\CP^{m-1} \to X$ be a $(\ol{p},\ol{q})$-map and assume that we have chosen the $(p_i,q_i)$-polynomials $f_i$ that define it. Denote by $W^i_{p_i,q_i}$ the complex affine space of all $(p_i,q_i)$-polynomials whose restriction to $\CP^{m-1}$ concides with $f_i$, and by $W_{\ol{p},\ol{q}}$ the cartesian product of the $W_{p_i,q_i}$. Let $P_f(\ol{p},\ol{q}) \subset W_{\ol{p}, \ol{q}}$ be the subspace of all the $r$-tuples in $W_{\ol{p},\ol{q}}$ whose values are not in $Y$ for all points in $\CC^{m+1}\setminus \{0\}$. These are precisely the $r$-tuples that define $(\ol{p},\ol{q})$-maps, though we stress that more than one $r$-tuple in  $P_f(\ol{p},\ol{q})$ may correspond to the same map.

For $\ol{a}$  as in (\ref{a}), we define the stabilization maps
\[P_f(\ol{p},\ol{q}) \rightarrow P_f(\ol{p}+\ol{a}, \ol{q}+\ol{a}),\]
\[(h_1(z),...,h_r(z)) \mapsto (|z|^{2a_1}h_1(z),...,|z|^{2a_r}h_r(z)).\]
The space $P_f(\ol{d}+\infty, \infty)$ is defined as the direct limit of these inclusions, where  $\ol{d}=\ol{p}-\ol{q}$.

In the next section we shall see that these maps induce isomorphisms in homology in the dimensions given in Theorem~\ref{main}. Here, we shall prove that the spaces $P_f(\ol{p},\ol{q})$ approximate the space of all continuous maps topologically:
\begin{prop}\label{limit} $P_f(\ol{d}+\infty, \infty)$ is homotopy equivalent to $\Omega^{2m}(X)$.
\end{prop}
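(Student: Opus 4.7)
My plan is to identify a $(\ol{p},\ol{q})$-map with a continuous $S^1$-equivariant map $S^{2m+1}\to\CC^r - Y$ of weights $d_i=p_i-q_i$ (via the normalization $\hat F_i(z)=F_i(z)/|z|^{p_i+q_i}$, $z\in S^{2m+1}$), so that $P_f(\ol{d}+\infty,\infty)$ embeds as a subspace of the space $\mathcal{E}_f$ of all such continuous equivariant maps restricting on $S^{2m-1}$ to the prescribed polynomial lift of $f$.  The stabilization $(F_i)\mapsto(|z|^{2a_i}F_i)$ acts as the identity on sphere restrictions, and a bihomogeneous polynomial is determined by its sphere restriction, so the embedding is well-defined and injective.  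The proof then splits into (i) $\mathcal{E}_f\simeq\Omega^{2m}X$ and (ii) the inclusion $P_f(\ol{d}+\infty,\infty)\hookrightarrow\mathcal{E}_f$ is a weak equivalence.

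For (i), composition with the principal $G$-bundle projection $\CC^r-Y\to X$ makes $\mathcal{E}_f$ the total space of a fibration over $\operatorname{Map}_f(\CP^m,X)\simeq\Omega^{2m}X$.  Any $g$ extending $f$ admits a lift because the pullback $G$-bundle $g^*(\CC^r - Y)$ is determined up to isomorphism by its restriction to $\CP^{m-1}$ (using that $H^2(\CP^m)\to H^2(\CP^{m-1})$ is an isomorphism), and this restriction has a polynomial trivialization coming from the given lift of $f$.  The fiber of the projection is then a torsor under $\operatorname{Map}_*(\CP^m/\CP^{m-1},G)\simeq\Omega^{2m}G$; since $G$ is a torus (because $X$ is smooth and complete, so $A_{n-1}(X)$ is free) and $m\geq 1$, the space $\Omega^{2m}G$ is contractible, and the projection is a weak equivalence.

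For (ii), I would use Stone--Weierstrass on the compact manifold $S^{2m+1}$: polynomials in $z,\ol{z}$ are uniformly dense in continuous functions, and averaging over the $S^1$-action projects this density onto each isotypic component, yielding polynomial approximations of bidegree $(p_i,q_i)$ with $p_i-q_i=d_i$ to arbitrary accuracy.  Since $\CC^r - Y$ is open and $S^{2m+1}$ is compact, any sufficiently close approximation stays in $\CC^r - Y$; a parametrized version of the same argument over a compact parameter space $K$ produces polynomial approximations of arbitrary maps $K\to\mathcal{E}_f$, which is what is needed to upgrade density to a weak equivalence on all homotopy groups.

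The main obstacle is enforcing \emph{exact} agreement with the prescribed $(f_i)$ on $\CP^{m-1}$, rather than only uniform closeness.  To handle this I would fix some initial morphism $F^{0}\in P_f(\ol{d})\subset P_f(\ol{d},0)$, use its sphere restriction as a reference extension of $\hat f$ to $S^{2m+1}$, and apply Stone--Weierstrass to the difference $\hat g-\hat F^{0}$, which vanishes on $S^{2m-1}$: a relative Stone--Weierstrass argument approximates any such vanishing difference by polynomial tuples whose coordinates are divisible by $z_m$ or $\ol{z}_m$, guaranteeing exact boundary matching while preserving $S^1$-weights.  Adding $\hat F^{0}$ back produces an element of $P_f(\ol{p},\ol{q})$ for suitably stabilized bidegree, and the parametrized version yields the weak, hence genuine (by CW-type), homotopy equivalence $P_f(\ol{d}+\infty,\infty)\simeq\mathcal{E}_f\simeq\Omega^{2m}X$.
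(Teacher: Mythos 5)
Your proof is essentially correct but takes a genuinely different route from the paper's. The paper works directly with maps into $X$: it presents a continuous map $\CP^m\to X$ as an $r$-tuple of sections of line bundles over $\CP^m$ avoiding $Y$, invokes the Stone--Weierstrass theorem \emph{for vector bundles} (quoted in the text) to approximate such families uniformly by $(\ol{p},\ol{q})$-maps agreeing with $f_i|z|^{2k}$ on $\CP^{m-1}$ (Lemma~\ref{aprox}), and then uses the fact that uniformly close maps into a compact Riemannian manifold are homotopic, applied with parameter spaces $W=S^k$ and $W=S^k\times[0,1]$, to get bijectivity on all homotopy groups, finishing with Whitehead. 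You instead factor the comparison through the intermediate space $\mathcal{E}_f$ of continuous $S^1$-equivariant lifts $S^{2m+1}\to\CC^r-Y$, proving separately that (i) the projection $\mathcal{E}_f\to\Omega^{2m}X$ is a weak equivalence because its fibres are torsors under $\operatorname{Map}_*(S^{2m},G)\simeq\Omega^{2m}G$, contractible since $G$ is a torus, and (ii) polynomial equivariant maps are dense among continuous ones by the scalar Stone--Weierstrass theorem plus $S^1$-averaging and an ideal-theoretic relative argument for the boundary condition. Your route buys something real: it makes explicit the lifting step that the paper passes over in one sentence (``a continuous map can be given as a collection of sections $s_i$''), isolates exactly where smoothness of $X$ (hence freeness of the class group and of the $G$-action) enters, and needs only the scalar Stone--Weierstrass theorem rather than the bundle version. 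The paper's route is shorter because the bundle-theoretic content is absorbed into the quoted approximation theorem.

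Two points to tighten. First, your existence-of-lifts argument via ``$H^2(\CP^m)\to H^2(\CP^{m-1})$ is an isomorphism'' fails for $m=1$, where $\CP^{m-1}$ is a point; there the obstruction to lifting $g$ equivariantly with weights $\ol{d}$ is precisely that $g$ lie in the component of $\Omega^2X$ indexed by $\ol{d}$. This is the same $\pi_0$ caveat implicit in the paper's own statement (only the components reached by degree-$\ol{d}$ maps can be compared), so it is fixable, but you should say it. Second, the bundle you trivialize is not $g^*(\CC^r-Y)$ itself but its twist by the weights $d_i$ along $S^{2m+1}\to\CP^m$ (the bundle whose sections are the equivariant lifts); the $H^2$ argument applies equally to it, but the statement as written names the wrong bundle.
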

The principal tool in the proof of this statement is the Stone-Weierstrass Theorem for vector bundles (see, for instance, \cite{MO06}):
\begin{theorem}
Let $E$ be a locally trivial real vector bundle over a compact space $B$, $s_\alpha:E \rightarrow B$ a set of its sections, and let $A$ be a subalgebra of the $\RR$-algebra $C(B)$ of continuous real-valued functions on $B$. Suppose that
\begin{itemize}
 \item the subalgebra $A$ separates points of $B$, that is, for any pair $x,y \in B$ there exists $h \in A$ such that $h(x) \neq h(y)$,
 \item for any $y \in B$ there exists $h \in A$ such that $h(y) \neq 0$,
 \item for any $y \in B$ the fibre of $E$ over $y$ is spanned by $s_{\alpha}(y)$.
 \end{itemize}
Then the $A$-module generated by the $s_{\alpha}$ is dense in the space of all continuous sections of $E$.
\end{theorem}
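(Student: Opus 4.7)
The plan is to reduce the statement to the classical Stone-Weierstrass theorem for $C(B)$ together with a bundle-theoretic preliminary. The first two hypotheses on $A$ are precisely the classical assumptions, so $A$ is uniformly dense in $C(B)$. It remains to convert this into density in the space of continuous sections of $E$ (equipped with the sup norm associated to any continuous Euclidean metric on $E$, which exists on the compact Hausdorff $B$ by the usual partition-of-unity construction on a trivializing cover).

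First I would extract a finite spanning family. For each $y\in B$, finitely many of the $s_\alpha$ already span $E_y$, and in a local trivialization this spanning condition amounts to the non-vanishing of an appropriate minor of the coefficient matrix, hence is an open condition on $y$. Compactness of $B$ then produces finitely many $s_1,\ldots,s_N\in\{s_\alpha\}$ whose values span $E_y$ for \emph{every} $y\in B$. Next I would produce a continuous right inverse to the surjective bundle morphism $\Phi\colon B\times\RR^N\to E$, $\,(y;c_1,\ldots,c_N)\mapsto\sum_i c_i s_i(y)$. Using the chosen metric on $E$, the Moore-Penrose pseudo-inverse $\Psi\colon E\to B\times\RR^N$ sending $v\in E_y$ to the unique element of $(\ker\Phi_y)^{\perp}$ mapping to $v$ is continuous, because $\ker\Phi_y$ varies continuously with $y$ in any local trivialization (as the kernel of a continuously varying linear map of locally constant rank). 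Consequently any continuous section $\sigma$ can be written as $\sigma=\sum_{i=1}^N c_i s_i$ with $c_i=\mathrm{pr}_i\circ\Psi\circ\sigma$ a continuous real-valued function on $B$.

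The final step is a termwise approximation. Given $\varepsilon>0$, the classical Stone-Weierstrass theorem gives $h_i\in A$ with $\|h_i-c_i\|_\infty<\varepsilon/(NM)$, where $M=\max_i\sup_{y\in B}\|s_i(y)\|$; then $\sum_i h_i s_i$ lies in the $A$-module generated by $\{s_\alpha\}$ and differs from $\sigma$ by at most $\varepsilon$ in sup norm. The main subtlety, such as it is, is the second step: one must upgrade the pointwise spanning hypothesis to a \emph{continuous} global decomposition rather than a merely pointwise or local one. The cleanest way I know is the pseudo-inverse construction above; an alternative is to patch local decompositions via a continuous partition of unity (not required to lie in $A$) and to note that the resulting coefficient functions are continuous. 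Either way, once a continuous global decomposition is in hand, the remainder is the classical theorem applied coordinate by coordinate.
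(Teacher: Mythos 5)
Your argument is correct, but there is nothing in the paper to compare it against: the theorem is quoted as a known tool, with a pointer to \cite{MO06}, and no proof is given in the text. Your route --- the classical Stone--Weierstrass theorem (the first two hypotheses give density of $A$ in $C(B)$), a compactness argument extracting a finite subfamily $s_1,\dots,s_N$ that spans every fibre, a continuous Moore--Penrose right inverse of $\Phi\colon B\times\RR^N\to E$ yielding a global decomposition $\sigma=\sum c_i s_i$ with $c_i\in C(B)$, and termwise approximation of the $c_i$ by elements of $A$ --- is a complete and standard proof; the usual alternative (and, in effect, the one in the cited source) replaces the pseudo-inverse by gluing local coefficient solutions with a partition of unity, a variant you already mention, so the difference is cosmetic. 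Two small points are worth making explicit: $B$ is automatically Hausdorff because the subalgebra $A\subset C(B)$ separates points (so the classical theorem, the openness-plus-compactness extraction, and the existence of a continuous metric on $E$ all apply), and the constant $M=\max_i\sup_y\|s_i(y)\|$ should be assumed positive, the case $M=0$ being the trivial zero bundle.
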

The Stone-Weierstrass Theorem has the following consequence: 
\begin{lemma}\label{aprox} Let $W$ be a finite $CW$-complex. Any continuous map $$F:\CP^m \times W \rightarrow X$$ can be uniformly approximated by a $(p,q)$-map, for some $p$ and $q$, whose coefficients are functions on $W$. Moreover, if the restriction of $F$ to $\CP^{m-1} \times \{x\}$ is represented by a collection of polynomials $f_{i,x}$, the approximating polynomials can be chosen so as to coincide with $f_{i,x}\cdot |z|^{2k}$, for some $k$, on $\CP^{m-1} \times W$.
\end{lemma}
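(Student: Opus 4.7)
The plan is to lift $F$ to a continuous $S^{1}$-equivariant map into $\CC^{r}-Y$, apply the bundle Stone--Weierstrass Theorem above componentwise to approximate each coordinate by a polynomial of appropriate bidegree, and then modify the approximation so that it restricts correctly to $\CP^{m-1}\times W$.

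First I would produce a continuous lift $\tilde F=(\tilde F_1,\ldots,\tilde F_r)\colon S^{2m+1}\times W\to \CC^{r}-Y$ of the composition $F\circ\pi$, where $\pi\colon S^{2m+1}\times W\to \CP^{m}\times W$ is induced by the Hopf projection, such that each $\tilde F_i$ is $S^{1}$-equivariant of integer weight $d_i$, i.e.\ $\tilde F_i(\lambda z,x)=\lambda^{d_i}\tilde F_i(z,x)$ for $\lambda\in S^{1}$. The weights $d_i=p_i-q_i$ determined by the boundary data satisfy $\sum d_i\nn_i=0$, so they describe an embedding $S^{1}\hookrightarrow G$, and the existence of the lift reduces to a section of the pulled-back principal $G$-bundle, which can be arranged after absorbing the necessary degree shift into a factor $|z|^{2k}$. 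Since $(\CP^{m},\CP^{m-1})$ is a CW pair, the lift can moreover be chosen so that $\tilde F_i|_{S^{2m-1}\times W}=(|z|^{2k}f_{i,x})|_{S^{2m-1}\times W}$ for a fixed sufficiently large $k$.

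Next I would apply the bundle Stone--Weierstrass Theorem with $B=S^{2m+1}\times W$ (compact), $E$ the trivial complex line bundle, and $A$ the $C(W)$-subalgebra of $C(B;\CC)$ generated by polynomials in $z_0,\ldots,z_m,\bar z_0,\ldots,\bar z_m$: these functions separate points and span each fibre, so every continuous complex function on $B$---in particular each $\tilde F_i$---admits a uniform approximation by $C(W)$-valued polynomials. Projecting the approximation onto its $S^{1}$-weight-$d_i$ Fourier component (which preserves the polynomial structure) returns a $(p_i+k,q_i+k)$-polynomial approximant for all sufficiently large $k$. Because $F(\CP^{m}\times W)$ is compact inside the open set $\CC^{r}-Y$, any sufficiently close approximation still avoids $Y$ and is therefore a genuine $(\ol p+k\mathbf{1},\ol q+k\mathbf{1})$-map.

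Finally, to enforce the prescribed boundary behaviour exactly, I would apply the preceding step to the difference $\tilde F_i-|z|^{2k}f_{i,x}$, which vanishes on $S^{2m-1}\times W$ and therefore lies in the ideal generated by $z_m$ and $\bar z_m$ (taking $\CP^{m-1}=\{z_m=0\}$); writing it as $z_m\varphi_i+\bar z_m\psi_i$, approximating $\varphi_i,\psi_i$ by $C(W)$-valued polynomials, and multiplying back by $z_m$ or $\bar z_m$ produces a polynomial correction vanishing on $\{z_m=0\}$ which, added to $|z|^{2k}f_{i,x}$, yields the required approximation. The main obstacle is coordinating the lift, the weights, and the boundary match into a single choice of $k$: increasing $k$ only enlarges the available polynomial space without altering the underlying morphism, so a large enough $k$ accommodates all three constraints simultaneously.
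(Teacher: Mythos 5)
Your overall strategy coincides with the paper's: the paper also applies the bundle Stone--Weierstrass theorem to the $r$ coordinate sections of the line bundles $\OO(d_i)$ over $\CP^m\times W$ (which is the same data as your weight-$d_i$ equivariant lifts $\tilde F_i$ on $S^{2m+1}\times W$), and your Fourier-projection step and the compactness argument for staying off $Y$ are exactly what is needed there. However, your final boundary-matching step contains a genuine gap. The claim that a continuous function on $S^{2m+1}\times W$ vanishing on $S^{2m-1}\times W=\{z_m=0\}$ ``therefore lies in the ideal generated by $z_m$ and $\bar z_m$'' is false in the ring of continuous functions: any element $z_m\varphi+\bar z_m\psi$ with $\varphi,\psi$ continuous is $O(|z_m|)$ near the zero locus, while a function such as $|z_m|^{1/2}g$ vanishes on $\{z_m=0\}$ without being $O(|z_m|)$. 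So the difference $\tilde F_i-|z|^{2k}f_{i,x}$ need not admit the decomposition $z_m\varphi_i+\bar z_m\psi_i$ with continuous coefficients, and the step as written fails. The standard repair (and what the argument cited from \cite{MO06} amounts to) is to correct \emph{after} approximating rather than before: if $P_i$ is a polynomial $\epsilon$-approximation of $\tilde F_i$, replace it by $P_i-\iota\bigl(P_i|_{z_m=\bar z_m=0}\bigr)+\iota\bigl(|z|^{2k}f_{i,x}\bigr)$, where $\iota$ re-includes a bihomogeneous polynomial in $z_0,\dots,z_{m-1}$ as one in $z_0,\dots,z_m$ not involving $z_m$. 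Since restriction to $z_m=0$ and $\iota$ do not increase the supremum norm over the unit sphere (by bihomogeneity, the sup of such a polynomial over $S^{2m+1}$ equals its sup over $S^{2m-1}$), the corrected polynomial is still a $2\epsilon$-approximation and now restricts exactly to $|z|^{2k}f_{i,x}$ on $\CP^{m-1}\times W$.

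A secondary point: your justification of the equivariant lift is off. The obstruction to a section of the pulled-back principal $G$-bundle over $S^{2m+1}\times W$ lives in $H^2(W;\ZZ^{r-n})$ and is unaffected by any degree shift $|z|^{2k}$, which only changes the $S^1$-weights, not the isomorphism class of the bundle in the $W$-direction. The lift exists here because $F$ is required to agree with a fixed $f$ on $\CP^{m-1}\times\{x\}$ for every $x$ (so the restriction of $F$ to a slice $\{pt\}\times W$ with $pt\in\CP^{m-1}$ is constant, killing the $H^2(W)$-component of the obstruction); without some such condition the statement is not even true, since a polynomial map with $W$-dependent coefficients always pulls the generating line bundles of $X$ back to bundles trivial in the $W$-direction. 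The paper elides this point as well, but you should tie the existence of the lift to the boundary condition rather than to the factor $|z|^{2k}$.
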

The proof is entirely analogous to the proof in \cite{MO06} for the case $X=\CP^n$. Indeed, a continuous map $F:\CP^m \times W \rightarrow X$ can be thought as a family of maps $\CP^m \rightarrow X$ depending on a parameter $w\in W$. In particular, it can be given as a collection of $m+1$ sections $s_i$ of the line bundles $\OO(d_i)$, with $\sum_{\rho} d_{i} \langle \nn_{i}, \mm \rangle=0$  for some $\mm\in M$, and each $s_i$ depending on $w\in W$. Exactly as in \cite{MO06}, by the Stone-Weierstrass Theorem these sections can be approximated by functions, which, for any given $w$ are $(d_i+q, q)$-polynomials for some $q$, and these approximations can be chosen so as to give an arbirarly close approximation of $F$.

\begin{proof}[Proof of Proposition~\ref{limit}] For any compact Riemannian manifold $M$ and any space $E$ there exists $\epsilon>0$ such that any two maps $E\rightarrow M$ that are uniformly $\epsilon$-close, are homotopic. Thus, Lemma~\ref{aprox} implies that the map of the homotopy groups
\[\pi_kP_f(\ol{d}+\infty, \infty) \rightarrow \pi_k\Omega^{2m}(X)\]
is surjective for all $k$. Setting $W=S^k$ we get that any element of $\pi_k\Omega^{2m}(X)$ can be approximated by a class in $\pi_kP_f(\ol{d}+\infty, \infty)$. On the other hand, this isomorphism is also injective, as any homotopy that goes through continuous maps can be approximated by a homotopy through $(p,q)$-maps (set $W=S^k \times [0,1]$). Both spaces of maps have homotopy types of CW-complexes, so by the Whitehead Theorem they are homotopy equivalent.
\end{proof}

\section{The space $P_f(\ol{p},\ol{q})$ as a complement to a discriminant}\label{cd}

In what follows the notation $\CC^m$ will be used for the affine chart $z_m=1$ in $\CP^m$. We shall use the notation $T^{\bullet}$ for the one-point compactification of a space $T$.

Recall that we consider the toric variety $X$ as the quotient $(\CC^r-Y)/G$. The space $P_f(\ol{p},\ol{q})$ is the complement of a discriminant $\Theta$ in the space $W_{\ol{p},\ol{q}}$ which consists of collections of complex polynomials of multidegree $\ol{p}$ in the variables $z_i$ and $\ol{q}$ in the $\overline{z_i}$: 
\[ \Theta=\Theta_{\ol{p},\ol{q}}=\{ (F_1,...,F_r) \in W_{\ol{p},\ol{q}} \, |\, (F_1(z),...,F_r(z)) \in Y \text{ for some }  z \in \CC^{m}\}.\]
The  cohomology groups of $P_f(\ol{p},\ol{q})$ are related to the  homology groups of the one-point compactification of $\Theta$ by the Alexander duality: 
\[ \widetilde{H}^l(P_f(\ol{p},\ol{q}),\ZZ)\simeq
\widetilde{H}_{2N_{\ol{p},\ol{q}}-l-1}({\Theta}^{\bullet}, \ZZ)\] 
where $N_{\ol{p},\ol{q}}$ is the complex dimension of $W_{\ol{p},\ol{q}}$.

In order to study the topology of $\Theta^{\bullet}$, we shall use Vassiliev's method  of simplicial resolutions \cite{VA92}. We construct a simplicial resolution of $\Theta$ together with a natural filtration, and describe the first several terms of this filtration. Since not all the terms can be effectively described, we shall then use the truncation procedure as in \cite{MO11}.

Let $Z(\ol{p},\ol{q})=Z\subset W_{\ol{p},\ol{q}} \times \CC^m $ be the set $$Z=\{(F_1,...,F_r,x)\, |\, (F_1(x),...,F_r(x)) \in Y \}.$$ There is a projection map $Z \rightarrow \Theta$ which forgets the point $x\in \CC^m$.  We denote by  $Z^{\Delta}(\ol{p},\ol{q})$, or simply by $Z^{\Delta}$ the space of the non-degenerate simplicial resolution associated to this map. 
It is defined as the union of the spaces $Z_l=Z_l(\ol{p},\ol{q})$, with positive $l$, which are constructed as follows. First, embed (non-linearly) the space $\CC^m$ into some vector space $V$ in such a way that the image of any $2l$ distinct points of $\CC^m$ are not contained in any $2l-2$ dimensional subspace. This gives an embedding
\[Z\hookrightarrow  W_{\ol{p},\ol{q}} \times \CC^m \hookrightarrow  W_{\ol{p},\ol{q}}\times V.\]
The subspace $Z_l \subset W_{\ol{p},\ol{q}}\times V$ consists of all the $l-1$-simplices which lie in the fibres of the projection map onto $W_{\ol{p},\ol{q}}$ and whose vertices are on $Z_l$. We have a natural projection map $$Z_l\to \Theta$$ whose fibres are $(l-1)$-skeleta of  simplices of various, not necessarily finite, dimensions.  

\medskip

Consider the space of configurations of $l$ distinct points in $\CC^m$ with labels in $Y$: 
\[R_l:=\{(x_i,s_i) \in (\CC^m \times Y)^l\, |\, x_i\neq x_j  \text{ for } i\neq j \} \slash S_l\]
where the symmetric group $S_l$ acts by permuting the $l$ coordinates $(x_1,s_1)$, $\ldots,$  $(x_l,s_l)$. We write a point of $R_l$ as $\bigl((x_i,s_i)\bigr)$.
Recall that $Y$ can be seen as the union \[Y=\bigcup_{\{\nn_{i_1},...,\nn_{i_j} \}  \text{ primitive}}
\{(x_1,...,x_r)\, |\, x_{i_1}=...=x_{i_j}=0 \}, \]
where $\nn_1,...,\nn_r$ are generators for the 1-dimensional cones of the fan $\Sigma$ associated to $X$. Let $k$ be the cardinality of the smallest primitive set of edge generators of $\Sigma$. Then $R_l$ is a cell complex of dimension $2l(m+r-k)$.

Write $p$ for $\min\{ p_1,...,p_r\}$. We have the following
\begin{prop} \label{stages} For $l \leq p$ the space $Z_l \setminus Z_{l-1}$ is an affine bundle over the space $R_l$,  of real rank  $2(N_{\ol{p},\ol{q}}-rl)+l-1$.
\end{prop}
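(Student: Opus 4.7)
The plan is to define a forgetful map $\pi: Z_l\setminus Z_{l-1} \to R_l$ and identify its fibers explicitly. A point of $Z_l\setminus Z_{l-1}$ is a tuple $(F_1,\ldots,F_r) \in W_{\ol{p},\ol{q}}$ together with an unordered collection of $l$ distinct vertices $(x_1,\ldots,x_l)$ in the fiber of the projection $Z\to\Theta$ over it, so that $s_i := (F_1(x_i),\ldots,F_r(x_i)) \in Y$, and a point in the open $(l-1)$-simplex spanned by the images of the $x_i$ in $V$. The map $\pi$ sends this data to $\bigl((x_i,s_i)\bigr) \in R_l$. Its fiber over such a point is the product of the open $(l-1)$-simplex interior (of real dimension $l-1$) and the affine subspace $A(x,s) \subset W_{\ol{p},\ol{q}}$ cut out by the $rl$ complex linear equations $F_j(x_i) = s_{i,j}$.

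The key step is the linear independence of these $rl$ conditions, which would give $A(x,s)$ complex codimension $rl$, hence real codimension $2rl$, matching the advertised rank $2(N_{\ol{p},\ol{q}}-rl)+l-1$ once one adds the $l-1$ simplex dimensions. The conditions decouple over the index $j$, so it suffices to show, for each $j$, that evaluation at $l$ distinct points of $\CC^m$ is surjective from the vector space of $(p_j,q_j)$-polynomials vanishing on $\CP^{m-1}$. Here the hypothesis $l \leq p = \min p_j$ enters: given prescribed values $v_1,\ldots,v_l$, I would write an interpolant as $G_j(z,\bar{z}) = z_m\,\bar{z}_m^{q_j}\,H_j(z_0,\ldots,z_m)$, where $H_j$ is a homogeneous polynomial of degree $p_j-1$ in the holomorphic variables alone. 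Such $G_j$ has bidegree $(p_j,q_j)$, vanishes on $\CP^{m-1}$ because it is divisible by $z_m$, and in the chart $z_m=1$ satisfies $G_j(x_i) = H_j(x_i)$. The problem reduces to the classical statement that polynomials of degree at most $p_j-1 \geq l-1$ in $m$ complex variables can interpolate arbitrary values at $l$ distinct points; a Lagrange basis is built by taking, for each $i$, a product of $l-1$ linear forms, each vanishing at one of the other $x_k$ but not at $x_i$, normalized to equal $1$ at $x_i$.

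With constant fiber dimension established, local triviality of the bundle follows because the interpolation operator depends continuously on the configuration: on a small enough open $U$ in the ordered configuration space one can choose the separating linear forms continuously, which descends to trivializations of $A(x,s)$ over small opens in $R_l$ and combines with the standard trivialization of the open-simplex bundle. The main obstacle is the interpolation step; once it is settled, the remaining arithmetic $2(N_{\ol{p},\ol{q}}-rl) + (l-1)$ is bookkeeping. The argument mirrors the $X=\CP^n$ case of \cite{MO11}, the only novelty being that after fixing the labels $s_i \in Y$ the conditions $(F_1(x_i),\ldots,F_r(x_i)) = s_i$ split into $r$ independent one-coordinate problems and no feature of the ambient variety $Y$ beyond the hyperplane constraint on each $F_j$ is used.
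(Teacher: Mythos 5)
Your proposal is correct and follows essentially the same route as the paper: project $Z_l\setminus Z_{l-1}$ to $R_l$ by recording the points $x_j$ and their labels $F(x_j)$, identify the fibre as (affine space cut out by the $rl$ evaluation conditions) $\times$ (open $(l-1)$-simplex), and count dimensions. The only difference is that you actually justify the affine independence of the evaluation conditions via the $z_m\bar{z}_m^{q_j}$-times-Lagrange-interpolant construction, a step the paper asserts without proof; your justification is valid since $l\leq p\leq p_j$ gives degree $p_j-1\geq l-1$ for the interpolants.
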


\begin{proof} 
A point in 
$$Z_l\setminus Z_{l-1} \subset W_{\ol{p},\ol{q}}\times V$$ 
can be written as $(F,t)$ where $F=(F_1,...,F_r)$ and $t$ is in the convex hull in $V$ of $l$ distinct points $x_1,...,x_l\in \CC^m$ with $F(x_j) \in Y.$
There is a projection 
 $$Z_l \setminus Z_{l-1} \rightarrow R_l$$ 
 $$(F,t) \mapsto \bigl( (x_j, F(x_j) )\bigr).$$ 
 The inverse image of a fixed point $\bigl((x_j,s_j)\bigr) \in R_l$ is the cartesian product of the space of $r$-tuples of $(p_i,q_i)$-polynomials with $$(F_1(x_j),...,F_r(x_j))=s_j$$ and the convex hull of $x_1,...,x_l$ in $V$. Each equation $F_i(x_j)=s_j$, with $x_j$ and $s_j$ fixed, is a linear equation in the space of coefficients $W_{p_i,q_i}$. If $l \leq p$, all these equations are affinely independent and, hence, the space of $r$-tuples of $(p_i,q_i)$-polynomials satisfying $F(x_j)=s_j$ has real dimension $2(N_{\ol{p},\ol{q}}-rl)$. The convex hull of the $x_j$ is an $l-1$ simplex; adding the dimensions we get the proposition. 
\end{proof}

This description of the filtration $Z_l$ allows us to write a spectral sequence converging to the (co)homology of $P_f(\ol{p},\ol{q})$; the building blocks for this spectral sequence are the (co)homology groups of the one-point compactifications of the spaces $R_l$ with suitably twisted coefficients. However, we do not need to construct this spectral sequence explicitly in order to obtain the stabilization theorem.

\medskip

The stabilization map that sends $P_f(\ol{p},\ol{q})$ to $P_f(\ol{p}+\ol{a},\ol{q}+\ol{a})$ is actually a restriction of a map $W_{\ol{p},\ol{q}}\to W_{\ol{p}+\ol{a},\ol{q}+\ol{a}}$ which also sends  $\Theta_{\ol{p},\ol{q}}$ to $\Theta_{\ol{p}+\ol{a},\ol{q}+\ol{a}}$. This map also gives a map of the corresponding simplicial resolutions of the discriminants which preserves the filtrations. 

\begin{prop}\label{bundle} For $l \leq p$ the space $Z_l(\ol{p}+\ol{a}, \ol{q}+\ol{a})$ is an orientable affine bundle over $Z_l(\ol{p}, \ol{q})$ of real rank $2(N_{\ol{p}+\ol{a},\ol{q}+\ol{a}}-N_{\ol{p},\ol{q}})$.
\end{prop}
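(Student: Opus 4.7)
The plan is to construct the projection $\pi\colon Z_l(\ol{p}+\ol{a},\ol{q}+\ol{a})\to Z_l(\ol{p},\ol{q})$ fibrewise over $R_l$, using the affine bundle structures of Proposition~\ref{stages} together with the natural stabilization map. Because $|z|$ is strictly positive on the affine chart $\CC^m$ and $Y$ is invariant under coordinate-wise nonzero scalings, multiplication $F\mapsto|z|^{2\ol{a}}F$ is a complex-linear injection $W_{\ol{p},\ol{q}}\hookrightarrow W_{\ol{p}+\ol{a},\ol{q}+\ol{a}}$; the induced stabilization $\sigma\colon Z_l(\ol{p},\ol{q})\hookrightarrow Z_l(\ol{p}+\ol{a},\ol{q}+\ol{a})$ preserves the filtration and, on the open stratum $Z_l\setminus Z_{l-1}$, covers the smooth self-diffeomorphism $\phi\colon((x_j,s_j))\mapsto((x_j,|x_j|^{2\ol{a}}s_j))$ of $R_l$.

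Fibrewise over $R_l$, after identifying the two bases via $\phi$, the fibres from Proposition~\ref{stages} are $V\times\Delta$ and $V'\times\Delta$, where $V\subset W_{\ol{p},\ol{q}}$ and $V'\subset W_{\ol{p}+\ol{a},\ol{q}+\ol{a}}$ are the complex affine subspaces cut out by the evaluation conditions $F(x_j)=s_j$ and $G(x_j)=|x_j|^{2\ol{a}}s_j$ (each of complex codimension $rl$, since $l\leq p$ makes the defining linear equations independent, exactly as in the proof of Proposition~\ref{stages}) and $\Delta$ is the common convex hull of $x_1,\ldots,x_l$. The stabilization restricts to an affine injection $V\hookrightarrow V'$ covering the identity on $\Delta$; its fibrewise linear cokernel is a complex vector bundle $Q\to R_l$ of complex rank $N_{\ol{p}+\ol{a},\ol{q}+\ol{a}}-N_{\ol{p},\ol{q}}$. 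Any complex-linear splitting of the associated short exact sequence of affine bundles (available since $R_l$ is paracompact) identifies $Z_l(\ol{p}+\ol{a},\ol{q}+\ol{a})\setminus Z_{l-1}$ with the fibrewise direct sum $\sigma(Z_l(\ol{p},\ol{q})\setminus Z_{l-1})\oplus Q$, and hence exhibits it as a complex vector bundle of complex rank $N_{\ol{p}+\ol{a},\ol{q}+\ol{a}}-N_{\ol{p},\ol{q}}$ over $Z_l(\ol{p},\ol{q})\setminus Z_{l-1}$. The complex structure on the fibres supplies both the orientability and the asserted real rank $2(N_{\ol{p}+\ol{a},\ol{q}+\ol{a}}-N_{\ol{p},\ol{q}})$.

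To extend this structure from the top stratum to all of $Z_l$, the plan is to induct on $l$: the inductive hypothesis supplies the affine bundle $Z_{l-1}(\ol{p}+\ol{a},\ol{q}+\ol{a})\to Z_{l-1}(\ol{p},\ol{q})$, and one must verify that the splitting constructed above agrees with it in the limit as a configuration in $R_l$ degenerates to one in $R_{l-1}$. I expect this compatibility check to be the main obstacle; it should be handled by choosing the complements on each stratum so that they restrict correctly on the boundary, which is possible because the cokernel bundles $Q$ extend continuously across stratum boundaries and the space of complex-linear splittings of a short exact sequence is contractible. The overall argument mirrors the analogous stabilization step for $X=\CP^n$ carried out in~\cite{MO11}.
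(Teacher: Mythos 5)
Your description of the top stratum is correct, but the proposal stops short of the actual content of the proposition: the affine bundle structure is needed on all of $Z_l$, because the Thom isomorphism is applied to $\widetilde{H}_*(Z_l^{\bullet})$, and the inductive gluing across strata that you defer as ``the main obstacle'' is precisely the step that is not carried out. Choosing a splitting separately on each stratum $Z_j\setminus Z_{j-1}$ does not obviously assemble into a continuous affine projection on $Z_l$: the strata are indexed by the number of marked points, the codimension of the evaluation conditions jumps from $r(j-1)$ to $rj$ as one passes from one stratum to the next, and you would have to show that a splitting chosen near $Z_{l-1}$ extends over the open stratum compatibly with the degeneration of the simplex coordinates. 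Contractibility of the space of splittings plus paracompactness makes this plausible, but it is a genuine argument that the proposal only gestures at.

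The paper sidesteps the stratification entirely by fibering over a different base. Let $\Delta_l(\CC^m\times Y)$ denote the $(l-1)$-skeleton of the simplex with vertex set $\CC^m\times Y$. Both $Z_l(\ol{p},\ol{q})$ and $Z_l(\ol{p}+\ol{a},\ol{q}+\ol{a})$ map onto it by remembering the marked points and their images and forgetting the polynomials, and the stabilization covers the self-homeomorphism of this base induced by $(z,(s_1,\dots,s_r))\mapsto(z,(|z|^{2a_1}s_1,\dots,|z|^{2a_r}s_r))$. Over a point supported on $j\leq l$ marked points the two fibres are affine subspaces of codimension $rj$ in $W_{\ol{p},\ol{q}}$ and in $W_{\ol{p}+\ol{a},\ol{q}+\ol{a}}$ respectively (the evaluation conditions being independent since $l\leq p$), so although each fibre dimension varies from stratum to stratum, the smaller fibre sits inside the larger with \emph{constant} complex codimension $N_{\ol{p}+\ol{a},\ol{q}+\ol{a}}-N_{\ol{p},\ol{q}}$ over all of $\Delta_l(\CC^m\times Y)$. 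This single observation yields the orientable affine bundle structure on all of $Z_l$ at once, with no induction on $l$ and no boundary matching; I recommend replacing the $R_l$-fibrewise construction by this one.
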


\begin{proof} 
Let $\Delta_l(\CC^m\times Y)$ be the $l$th term of the filtration on the simplicial resolution of the map that collapses $\CC^m\times Y$ to one point. One can think of $\Delta_l(\CC^m\times Y)$ as the $(l-1)$-skeleton of the simplex whose vertices are points of $\CC^m\times Y$. 

There is a surjective map 
$$\pi : Z_l(\ol{p},\ol{q})\to\Delta_l(\CC^m\times Y),$$
which forgets the polynomials $F_1,\ldots, F_r$ but remembers the images of the points $x_j$. The stabilization map gives a commutative diagram
\[
\begin{array}{ccc}
Z_l(\ol{p},\ol{q})&\hookrightarrow&Z_l(\ol{p}+\ol{a},\ol{q}+\ol{a})\\
\pi \downarrow\quad&&\pi \downarrow\qquad\\
\Delta_l(\CC^m\times Y)&\simeq&\Delta_l(\CC^m\times Y)
\end{array}
\]
where the upper horizontal arrow is the stabilization and the lower horizontal equivalence is the self-homeomorphism induced by the map $\CP^m \times Y \to \CP^m \times Y$ given by $$(z,(s_1,...,s_r)) \mapsto (z, (|z|^{2a_1}s_1,...,|z|^{2a_r}s_r)).$$ Over each point of $\Delta_l(\CC^m\times Y)$ the fiber of $\pi$ in  $Z_l(\ol{p},\ol{q})$ is a complex affine subspace of the fiber of $\pi$ in $Z_l(\ol{p}+\ol{a},\ol{q}+\ol{a})$ whose complex codimension is exactly $N_{\ol{p}+\ol{a},\ol{q}+\ol{a}}-N_{\ol{p},\ol{q}}$.
\end{proof}

An immediate corollary of Proposition~\ref{bundle} is that for all $i$ we have a Thom isomorphism 
$$\widetilde{H}_i(Z_l(\ol{p},\ol{q})^{\bullet})\simeq \widetilde{H}_{i+2(N_{\ol{p}+\ol{a},\ol{q}+\ol{a}}-N_{\ol{p},\ol{q}})}(Z_l(\ol{p}+\ol{a},\ol{q}+\ol{a})^{\bullet})$$
for all $l\leq p$. 
\medskip

Now, a homotopy-equivalent space to the space $\Theta(\ol{p},\ol{q})^{\bullet}$ can be obtained from $Z_p(\ol{p},\ol{q})^{\bullet}$ via the construction of a truncated simplicial resolution as in \cite{MO11}. Essentially, the truncated, after the $p$th term, simplicial resolution consists in collapsing homotopically the non-contractible fibres of the natural projection 
$$Z_p(\ol{p},\ol{q})^{\bullet}\to\Theta(\ol{p},\ol{q})^{\bullet},$$
by means of adding a cone on each non-contractible fibre, see  \cite{MO11}. According to Lemma~2.1 of \cite{MO11}, the truncated simplicial resolution is obtained from  $Z_p(\ol{p},\ol{q})^{\bullet}$ by adding cells of dimension at most $\dim({Z_p(\ol{p},\ol{q})}\backslash {Z_{p-1}(\ol{p},\ol{q})})+1$. By Proposition~\ref{stages}, this dimension equals $$\left(2(N_{\ol{p},\ol{q}}-pr)+p-1\right) + \dim R_p+1=2N_{\ol{p},\ol{q}}+p(2m-2k+1).$$ 
Similarly, the corresponding truncated simplicial resolution for $\Theta(\ol{p}+\ol{a},\ol{q}+\ol{a})^{\bullet}$ can be obtained  by adding cells of dimension at most $2N_{\ol{p}+\ol{a},\ol{q}+\ol{a}}+p(2m-2k+1)$ to $Z_p(\ol{p}+\ol{a},\ol{q}+\ol{a})^{\bullet}$. In particular, we get an isomorphism
$$\widetilde{H}_i(\Theta_{\ol{p},\ol{q}}^{\bullet})\simeq \widetilde{H}_{i+2(N_{\ol{p}+\ol{a},\ol{q}+\ol{a}}-N_{\ol{p},\ol{q}})}(\Theta_{\ol{p}+\ol{a},\ol{q}+\ol{a}}^{\bullet})$$
for all $i>2N_{\ol{p},\ol{q}}+p(2m-2k+1)$. From the Alexander duality it follows that 
$$H^{i}(P_f(\ol{p},\ol{q}),\ZZ)=H^{i}(P_f(\ol{p}+\ol{a},\ol{q}+\ol{a}),\ZZ)$$
for all $i<p(2k-2m-1)-1$.
The fact that this isomorphism is induced by the stabilization map follows from the definition of the Alexander duality pairing as the linking number. If a map induces isomorphisms in integral cohomology up to some dimension, it also induces isomorphisms in integral homology in the same dimensions. 
We get 
\begin{prop} \label{stab}The morphism 
\[\widetilde{H}_i(P_f(\ol{p},\ol{q}),\ZZ) \rightarrow \widetilde{H}_i(P_f(\ol{p}+\ol{a},\ol{q}+\ol{a}),\ZZ)\] 
induced by the stabilization map is an isomorphism for all  $i<p(2k-2m-1)-1$.
\end{prop}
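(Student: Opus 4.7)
The plan is to convert the statement into an assertion about the homology of the one-point compactifications of the discriminants $\Theta_{\ol{p},\ol{q}}$ and $\Theta_{\ol{p}+\ol{a},\ol{q}+\ol{a}}$ via the Alexander duality isomorphism, and then compare these two compactified discriminants using the filtered simplicial resolutions built above. Concretely, because Alexander duality identifies $\widetilde{H}^l(P_f(\ol{p},\ol{q}))$ with $\widetilde{H}_{2N_{\ol{p},\ol{q}}-l-1}(\Theta^{\bullet}_{\ol{p},\ol{q}})$, the claim for $l < p(2k-2m-1)-1$ is equivalent, after the shift $l \mapsto 2N_{\ol{p},\ol{q}}-l-1$, to showing that the stabilization induces an isomorphism $\widetilde{H}_i(\Theta^{\bullet}_{\ol{p},\ol{q}}) \simeq \widetilde{H}_{i+2(N_{\ol{p}+\ol{a},\ol{q}+\ol{a}}-N_{\ol{p},\ol{q}})}(\Theta^{\bullet}_{\ol{p}+\ol{a},\ol{q}+\ol{a}})$ for every $i > 2N_{\ol{p},\ol{q}}+p(2m-2k+1)$.

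For the range $l \leq p$ of the filtration this comparison is essentially automatic. Proposition~\ref{bundle} says that $Z_l(\ol{p}+\ol{a},\ol{q}+\ol{a})$ is an orientable real affine bundle over $Z_l(\ol{p},\ol{q})$ of rank $2(N_{\ol{p}+\ol{a},\ol{q}+\ol{a}}-N_{\ol{p},\ol{q}})$, so passing to one-point compactifications and applying the Thom isomorphism of this bundle gives the desired degree-shifted homology isomorphism at each filtration stage $l \leq p$, compatible with the inclusions $Z_{l-1} \hookrightarrow Z_l$. The main obstacle is that the filtration does not stop at $l=p$: beyond this range we have no clean description of $Z_l \setminus Z_{l-1}$, as the equations $F_i(x_j)=s_j$ can fail to be affinely independent and Proposition~\ref{stages} breaks down.

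To sidestep this, I would invoke the truncated simplicial resolution of \cite{MO11}: replace $Z^{\Delta}(\ol{p},\ol{q})$ by the space obtained from $Z_p(\ol{p},\ol{q})$ by coning off the (possibly non-contractible) fibres of $Z_p \to \Theta_{\ol{p},\ol{q}}$. By Lemma~2.1 of \cite{MO11}, this truncation is weakly equivalent to $\Theta^{\bullet}_{\ol{p},\ol{q}}$ and is built from $Z_p(\ol{p},\ol{q})^{\bullet}$ by attaching cells of dimension at most $\dim(Z_p \setminus Z_{p-1})+1$. Substituting the dimension computed from Proposition~\ref{stages} and Proposition on $\dim R_p = 2p(m+r-k)$, this bound is exactly $2N_{\ol{p},\ol{q}}+p(2m-2k+1)$; an identical bound, shifted by $2(N_{\ol{p}+\ol{a},\ol{q}+\ol{a}}-N_{\ol{p},\ol{q}})$, controls the cells added on the stabilized side. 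Above these bounds the homologies of the two truncated resolutions coincide with those of $Z_p(\ol{p},\ol{q})^{\bullet}$ and $Z_p(\ol{p}+\ol{a},\ol{q}+\ol{a})^{\bullet}$ respectively, and the Thom isomorphism from the previous paragraph finishes the homology comparison of the discriminants in the prescribed range.

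Finally I would transport the isomorphism back to $P_f$ via Alexander duality. The one subtle point is to verify that the isomorphism produced above is in fact induced by the geometric stabilization map, not merely an abstract equality of groups; this follows from the description of the Alexander duality pairing as the linking number, which is natural under the inclusions $\Theta_{\ol{p},\ol{q}} \hookrightarrow \Theta_{\ol{p}+\ol{a},\ol{q}+\ol{a}}$ and the dual inclusion on complements. Passing from cohomology to homology in the same range is standard (universal coefficients plus the fact that the cohomology isomorphism forces the homology one in degrees where both sides are finitely generated), which yields Proposition~\ref{stab}.
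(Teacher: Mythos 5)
Your proposal follows the paper's own argument essentially step for step: Alexander duality to transfer the statement to the compactified discriminants, the Thom isomorphism coming from Proposition~\ref{bundle} applied to the truncated filtration stage $Z_p$, the cell-dimension bound $2N_{\ol{p},\ol{q}}+p(2m-2k+1)$ obtained from Lemma~2.1 of \cite{MO11} together with Proposition~\ref{stages}, and the linking-number description of the duality pairing to see that the isomorphism is induced by the stabilization map. This is the same proof; no gaps.
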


Since the direct limit of the stabilization maps is homotopy equivalent to the space of continuous maps by Proposition~\ref{limit}, Theorem~\ref{main} now follows.

\section*{Acknowledgements}
This work is part of the doctoral thesis of the second author and was supported by a CONACyT scholarship.

\end{document}